\newtheorem{theorem}{Theorem}[section]
\newaliascnt{conj}{theorem}
\newaliascnt{cor}{theorem}
\newaliascnt{lemma}{theorem}
\newaliascnt{fact}{theorem}
\newaliascnt{claim}{theorem}
\newaliascnt{prop}{theorem}
\newaliascnt{definition}{theorem}
\newtheorem{cor}[cor]{Corollary}
\newtheorem{lemma}[lemma]{Lemma}
\newtheorem{prop}[prop]{Proposition}
\theoremstyle{definition}
\newaliascnt{example}{theorem}
\theoremstyle{remark}
\newaliascnt{rmk}{theorem}
\def\sek~{\S{}}
\numberwithin{equation}{section}
\newcommand{\diam}{\operatorname{diam}}
\newcommand{\Cone}{\operatorname{Cone}^{1}}
\newcommand{\dis}{\operatorname{d}}
\newcommand{\rad}{\operatorname{Rad}}
\newcommand{\alex}{\operatorname{Alex}}
\newcommand{\ZZ}{\mathds{Z}}
\renewcommand{\SS}{\mathbf{S}}
\newcommand{\RP}{\mathds{RP}}
\newcommand{\CP}{\mathds{CP}}
\newcommand{\HP}{\mathds{HP}}
\newcommand{\CaP}{\rm{Ca}\mathds{P}}
\begin{document}
\title{Rigidity for Positively Curved Alexandrov Spaces with Boundary}
\author{Jian Ge}
\address[Ge]{Beijing International center for Mathematical Research, Peking University. Beijing 100871, China}
\email{jge@math.pku.edu.cn}

\author{Ronggang Li}
\address[Li]{School of Mathematical Science, Peking University. Beijing 100871, China}
\email{lrg@pku.edu.cn}

\subjclass[2000]{Primary: 53C23, 53C20}
\keywords{Alexandrov space, Riemannian manifold, Radius, Rigidity, Symmetric space}

\begin{abstract}
Inspired by a recent work of Grove-Petersen in \cite{GP2018}, where the authors studied Alexandrov spaces with largest possible boundary. We study  Alexandrov spaces with lower curvature bound $1$ and with small boundary. When the radius of $X$ is $\pi/2$,  and the boundary has diameter $\pi/2$, we classify the total space $X$. 
\end{abstract}
\maketitle
\section{Introduction}
Recall the radius of a metric space $(X, \dis)$ at $p$ is defined by
$$
\rad_{p}(X)=\sup\{\dis(p, x)| x\in X\},
$$
and the radius of $X$ is defined by
$$
\rad(X)=\inf_{p\in X}\rad_{p}(X).
$$
The celebrated Radius Sphere Theorem of Grove-Petersen \cite{GP1993} says that if $X$ is an $n$-dimensional Alexandrov space with lower curvature bound $1$ and $\rad(X)>\pi/2$, then $X$ is homeomorphic to a sphere. The $\pi/2$ is sharp in the sense that the projective spaces have radius $\pi/2$. On the other hand all positively curved Alexandrov spaces with lower curvature bound $1$ can be rescaled to ones with arbitrary small radius, it is interesting to study those spaces with radius equals to $\pi/2$. In this note we will use $\alex^{n}(\kappa)$ to denote the set of all complete $n$-dimensional Alexandrov spaces with lower curvature bound $\kappa$ and use $\mathcal{M}^{n}(\kappa)$ to denote the set of all complete smooth $n$-dimensional Riemannian manifolds with lower sectional curvature bound $\kappa$. We are interested in the following set:
$$
\mathcal{A}:=\{X\in \alex^{n}(1)| \rad(X)=\pi/2\}.
$$

For the Riemannian case, the diameter rigidity theorem of Gromoll and Grove (cf. \cite{GG1987} and \cite{Wil2001}) says that if $M$ is a smooth Riemannian manifold with sectional curvature lower bound $1$ and diameter $\pi/2$, then $M$ is either homeomorphic to the sphere or \emph{isometric} to one of the Compact Rank One Symmetric Spaces (CROSS for short). Moreover, if $\rad(M)=\pi/2$, then $M$ is isometric to one of the projective spaces, i.e. $\RP^{n}, \CP^{n}, \HP^{n}, \CaP^{2}$ with the canonical metric normalized such that sectional curvatures lie in $[1, 4]$. 

In the Alexandrov setting, one verifies easily that for any $X, Y\in \mathcal{A}$, the spherical join $X*Y\in \mathcal{A}$. Other examples can be constructed via group actions, see discussion of rigidity part in \cite{GP1993}. Therefore, $\mathcal{A}$ contains many more spaces than the Riemannian setting. It is a challenging problem to classify all such spaces. cf \cite{RW2016} for a recent rigidity result.

Inspired by a recent result of Grove-Petersen \cite{GP2018}, we study the following subset of $\mathcal{A}$:
$$
\mathcal{B}:=\{X\in \alex^{n}(1)| \rad(X)=\pi/2, \partial X\in \mathcal{M}^{n-1}(1)\}.
$$
When we write $\partial X\in \mathcal{M}$, we consider $\partial X$ as an inner metric space in the sense that the distance between two points in $\partial X$ is the infimum of all paths connecting them in $\partial X$. Since the curvature of $X$ is positive, by \cite{Per1991}, the distance function $\rho(\cdot):=\dis(\partial X, \cdot)$ is strictly concave in $X$. Therefore there exists a unique point, denoted by $s\in X$ at which $\rho$ achieves the maximum. $s\in X$ will be called the point soul of $X$. Consequently, $X$ is homeomorphic to the cone over its boundary $\partial X$. Therefore, it is only interesting to study the \emph{geometry} of such spaces. 

To state our theorem, we need the following notations. For $\theta\in (0, \pi]$, the \emph{Alexandrov Lens} is defined as the spherical join (cf. \cite{GP2018})
$$
L^{n}_{\theta}=\SS^{n-2}*[0, \theta],
$$
where $\SS^{n-2}$ is the unit $(n-2)$-dimensional sphere. The space $L_{\theta}^{n}$ admits a canonical isometric $\ZZ_{2}:=\ZZ/2\ZZ$ action, induced by the antipodal action on the factor $\SS^{n-2}$ and reflection with respect to $\theta/2$ on the other factor $[0, \theta]$. The quotient space $L^{n}_{\theta}/\ZZ_{2}$ is homeomorphic to the cone over $\RP^{n-1}$, with boundary isometric to the round $\RP^{n-1}$ of constant curvature $1$. For any $Y\in \alex^{n-1}(1)$, we denote the spherical cone $[0, \pi/2]\times_{\sin(t)}Y$ by $\Cone(Y)$.

In order to study $\mathcal B$, we study the following closed related class:
$$
\mathcal{B}':=\{X\in \alex^{n}(1)| \rad_{s}(X)=\pi/2, \partial X\in \mathcal{M}^{n-1}(1)\}.
$$
We have the following rigidity theorem of $\mathcal{B}'$:
\begin{theorem}\label{thm:main}
Let $X^{n}\in \alex^{n}(1)$ with $\partial X\in \mathcal{M}^{n-1}(1)$ and $\rad_{s}(X)=\pi/2$, then
\begin{enumerate}
\item (\cite{GP2018}) If $\partial X=\SS^{n-1}$, then $X=L_{\theta}^{n}$, for $\theta\in (0, \pi]$;
\item If $\diam(\partial X)< \pi/2$, then $X=\Cone(\partial X)$,
\item If $\diam(\partial X)=\pi/2$ and $\partial X$ is not homeomorphic to $\SS^{n-1}$, then the following hold:
\begin{enumerate}
\item If $\partial X=\RP^{n-1}$, then $X=L^{n}_{\theta}/\ZZ_{2}$, for $\theta\in (0, \pi]$.
\item If $\partial X=\CP^{m}, \HP^{m}, \CaP^{2}$, then $X=\Cone(\partial X)$.
\end{enumerate}
\end{enumerate}
\end{theorem}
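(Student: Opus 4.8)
The plan is to reconstruct $X$ from the radial geometry based at the point soul $s$, and then to read off the classification from the intrinsic type of $\partial X$. Since $\rho=\dis(\partial X,\cdot)$ is strictly concave with unique maximum at $s$, every $x\in X$ lies on a minimal geodesic from $s$, and $\rad_s(X)=\pi/2$ bounds the length of each such geodesic by $\pi/2$. Writing $\Sigma_s$ for the space of directions of $X$ at $s$ and $r(v)$ for the distance from $s$ to $\partial X$ along the direction $v\in\Sigma_s$, the first task is to prove that the gradient exponential map $\gexp_s$ restricts to an \emph{isometry} from the cone region $\Omega=\{(t,v):0\le t\le r(v)\}\subset\Cone(\Sigma_s)$ onto $X$. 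The Toponogov hinge comparison at $s$ already makes $\gexp_s$ distance non-increasing; the content is the equality case, which should hold because $\pi/2$ is the extremal radius in curvature $\ge 1$, so the hinges issuing from $s$ bound genuinely round triangles and geodesics from $s$ neither branch nor reconverge faster than on $\SS^n$. I would also identify $\Sigma_s$ with $\partial X$ as inner metric spaces via $v\mapsto\gexp_s(r(v),v)$, so that $\Sigma_s\in\mathcal M^{n-1}(1)$ and the classification of $X$ reduces to that of the pair $(\Sigma_s,r)$.

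The key structural dichotomy is then the following. If $r\equiv\pi/2$, the boundary sits at constant distance $\pi/2$ and $X=\Cone(\partial X)$. If instead $r$ is nonconstant, I claim the region $\Omega$ can close up into an Alexandrov space with curvature $\ge 1$ and smooth manifold boundary only when $\Cone(\Sigma_s)$ carries a spherical suspension splitting, i.e.\ $\Sigma_s$ is isometric to a join $\SS^0\ast Z$ or to a $\ZZ_2$-quotient $(\SS^0\ast Z)/\ZZ_2$ by the involution that is antipodal on $\SS^0$; in the first case $X$ is the lens $\SS^{n-2}\ast[0,\theta]=L^n_\theta$, and in the second its quotient $L^n_\theta/\ZZ_2$, with $\theta$ the length of the pinched suspension factor. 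The farthest set $F=\{x:\dis(s,x)=\pi/2\}=\gexp_s(\{r=\pi/2\})$ organizes this: the first variation formula forces $F\subset\partial X$ (in curvature $\ge 1$ the distance from $s$ has no interior maximum at the value $\pi/2<\pi$), and $F$ is exactly the fold locus of the boundary graph of $r$, the image of the suspension factor $Z$.

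Granting this dichotomy, the theorem follows from the intrinsic geometry of $\partial X\cong\Sigma_s\in\mathcal M^{n-1}(1)$. A spherical suspension $\SS^0\ast Z$ that is a smooth manifold must have $Z=\SS^{n-2}$ round (a pole is a smooth point with space of directions $Z$), whence $\Sigma_s=\SS^{n-1}$ and $X=L^n_\theta$, which is case (1), recovering \cite{GP2018}. The only CROSS arising as the corresponding $\ZZ_2$-quotient is $\RP^{n-1}=(\SS^0\ast\SS^{n-2})/\ZZ_2$, giving case (3a). In all remaining situations $\partial X$ is neither a suspension nor such a quotient: when $\diam(\partial X)<\pi/2$ it cannot be a suspension (suspensions have diameter $\pi$), so $r\equiv\pi/2$ and $X=\Cone(\partial X)$, which is (2); when $\diam(\partial X)=\pi/2$ and $\partial X\not\cong\SS^{n-1}$, the Gromoll--Grove--Wilking diameter rigidity theorem identifies $\partial X$ with $\RP^{n-1},\CP^m,\HP^m,\CaP^2$, and among these only $\RP^{n-1}$ is an admissible quotient, so $\CP^m,\HP^m,\CaP^2$ force $r\equiv\pi/2$ and $X=\Cone(\partial X)$, which is (3b).

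The hard part is the forward direction of the dichotomy: that a nonconstant radial function can close up to a manifold-bounded space only over a suspension or its $\ZZ_2$-quotient; equivalently, that $\CP^m,\HP^m,\CaP^2$ cannot bound a genuinely pinched cone region. The obstruction I would exploit is that the fold locus $F\subset\partial X$ must be a \emph{totally geodesic hypersurface} of $\partial X$: at $\xi\in F$ the boundary is a smooth manifold point while the folding forces $\Sigma_\xi X$ to split off a reflection $\SS^0\ast(\cdot)$ factor, which pins $F$ down as a codimension-one totally geodesic submanifold along which $\partial X$ reflects. The round sphere and $\RP^{n-1}$ contain such a hypersurface (an equatorial $\SS^{n-2}$, respectively $\RP^{n-2}$), but $\CP^m,\HP^m,\CaP^2$ admit no totally geodesic hypersurface at all, so no folding is possible and $r\equiv\pi/2$. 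Turning this into a proof, via the explicit computation of $\Sigma_\xi X$ along $F$ together with the manifold hypothesis on $\partial X$, is where the essential work lies; the radius and diameter hypotheses serve only to set up the rigid cone-region picture, after which the dichotomy is a statement about the CROSSes themselves.
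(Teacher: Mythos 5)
Your endgame obstruction (that $\CP^m,\HP^m,\CaP^2$ admit no totally geodesic hypersurfaces, while $\SS^{n-1}$ and $\RP^{n-1}$ do) is exactly the one the paper uses, but the route you take to it has two genuine gaps, and both sit precisely where the real work is. First, your opening step --- that $\gexp_s$ restricts to an \emph{isometry} from $\Omega=\{(t,v):0\le t\le r(v)\}\subset\Cone(\Sigma_s)$ onto $X$, with $\Sigma_s\cong\partial X$ as inner metric spaces --- is not a lemma you can set up with; it is essentially the full rigidity conclusion in disguise. Toponogov does give that $\gexp_s$ is $1$-Lipschitz, but the hypothesis $\rad_s(X)=\pi/2$ does \emph{not} force equality in the hinge comparisons at $s$: radius $\pi/2$ is not an extremal value that rigidifies all hinges (contrast $\rad=\pi$, which forces a suspension). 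The paper never proves, and never needs, such a global statement; it extracts equality only along very special configurations --- hinges whose one side is a boundary geodesic from a foot point $p$ with $|\uparrow_p^s,\dot\gamma^+(0)|=\pi/2$ reaching a point of $E$ (\autoref{lem:key} and \autoref{prop:key:rigidity}) --- and even then only angle identities, not metric isometry. Since your identification of the fold locus $F$ and the splitting of $\Sigma_\xi X$ along it are read off from the cone-region picture, the whole proposal rests on this unproven claim.

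Second, you concede that the dichotomy (nonconstant $r$ forces a suspension splitting, equivalently $F$ is a codimension-one totally geodesic submanifold) is ``where the essential work lies,'' but you supply none of the mechanisms that make it work. The paper's substitute is concrete: $E$ is $\pi$-convex with radial directions orthogonal to $\Uparrow^s$ (\autoref{lem:E_convex}, from \cite{GP2018}); for non-spherical CROSSes all geodesics are closed with period $\pi$, and combining \autoref{lem:key} with \autoref{prop:key:rigidity} shows a closed geodesic through $x\in E$ either lies in $E$ or meets the distance sphere $\{|s,\cdot|=\pi/2\}$ only at $x$; Grove--Petersen's computation $\Sigma_x(X)=L^{n-1}_\theta=\SS^{n-3}\ast[0,\theta]$ at $x\in E$ then yields $\dim E\ge n-2$ and that $E$ is closed without boundary (\autoref{thm:key:dimension}). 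Finally, in the $\RP^{n-1}$ case with $E=\RP^{n-2}$, the paper does not argue directly on $X$ at all: it passes to the double cover of $X$ branched over the soul (via \cite{DGGM2018}) to reduce to the sphere-boundary case (1) and pull the lens structure $L^n_{2\theta}/\ZZ_2$ back down --- a step with no counterpart in your sketch. Note also that case (2) follows in the paper from a short dual-pair argument ($E\ne A$ forces $\diam(\partial X)\ge\pi/2$ via \autoref{cor:diam}, then \autoref{lem:cone}), so even the easy case should not be routed through the unproven dichotomy. To salvage your approach you would need to prove the isometry of $\gexp_s$ on $\Omega$ independently, and at that point you would find yourself redoing the paper's boundary-based Toponogov analysis anyway.
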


Note that the case (1) in the theorem above is proved by Grove-Petersen in \cite{GP2018}. There is one case, i.e. $\rad(\partial X)\ge\pi/2$ except CROSSes, when we do not have rigidity result. \autoref{thm:main} implies the structures of $\mathcal{B}$. If $\partial X$ is a spherical space form, we can drop the radius assumption, that is
\begin{theorem}\label{thm:spaceform}
Let $X\in \alex^{n}(1)$ with $\partial X$ isometric to an $(n-1)$-dimensional spherical space form. If $\partial X=\RP^{n-1}$ then $X=L^{n}_{\theta}/\ZZ_{2}$, for $\theta\in (0, \pi]$. Otherwise $X=\Cone(\partial X)$.
\end{theorem}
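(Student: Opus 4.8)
The plan is to reduce Theorem~\ref{thm:spaceform} to part (1) of Theorem~\ref{thm:main} by passing to a suitable metric cover. Write $\partial X=\SS^{n-1}/\Gamma$, where $\Gamma\subset O(n)$ is the finite, freely acting space form group, so that $\Gamma=\pi_{1}(\partial X)$. Since $X$ is homeomorphic to the cone over $\partial X$ with soul $s$, the punctured space $X\setminus\{s\}$ deformation retracts onto $\partial X$ and hence has fundamental group $\Gamma$. First I would form the connected $\Gamma$-cover $\pi\colon\widehat{X\setminus\{s\}}\to X\setminus\{s\}$ and equip it with the pulled-back length metric; as $\pi$ is a local isometry, this cover lies in $\alex^{n}(1)$ away from its incomplete end over $s$. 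Taking the metric completion $\tilde X$ adds points over $s$; because the small punctured neighborhoods of $s$ are metric cones over the space of directions $\Sigma_{s}$, and the inclusion of a geodesic sphere is a homotopy equivalence $\Sigma_{s}\simeq X\setminus\{s\}\simeq\partial X$, the induced map $\pi_{1}(\Sigma_{s})\to\pi_{1}(X\setminus\{s\})=\Gamma$ is an isomorphism, so the cover stays connected near $s$ and the completion adds exactly one point $\tilde s$. The boundary of $\tilde X$ is the corresponding cover of $\partial X$, namely the round $\SS^{n-1}$, and $\Gamma$ acts on $\tilde X$ as the deck group by isometries fixing $\tilde s$ and restricting to the original space form action on $\partial\tilde X=\SS^{n-1}$.

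With $\tilde X\in\alex^{n}(1)$ and $\partial\tilde X=\SS^{n-1}$ in hand, the Grove--Petersen rigidity for largest possible boundary (part (1) of Theorem~\ref{thm:main}) gives $\tilde X=L^{n}_{\theta}$ for some $\theta\in(0,\pi]$, and it remains to classify the isometric $\Gamma$-actions on $L^{n}_{\theta}$ that restrict to a free action on the boundary. I would split on $\theta$. For $\theta<\pi$ every isometry of $L^{n}_{\theta}=\SS^{n-2}*[0,\theta]$ preserves the join decomposition and hence the unordered pole pair $\{0,\theta\}$, so $\operatorname{Isom}(L^{n}_{\theta})=O(n-1)\times\ZZ_{2}$, the $\ZZ_{2}$ being the reflection $t\mapsto\theta-t$ that interchanges the two boundary poles. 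Any element acting trivially on the $\ZZ_{2}$ factor fixes both poles and is therefore not free on $\partial\tilde X$ unless it is the identity; thus $\Gamma$ injects into $\ZZ_{2}$, its nontrivial element must be the pole-swap composed with the antipodal map of $\SS^{n-2}$, and we obtain $\partial X=\RP^{n-1}$ with $X=L^{n}_{\theta}/\ZZ_{2}$. For $\theta=\pi$ we have $L^{n}_{\pi}=\Cone(\SS^{n-1})$, whose isometry group is the larger $O(n)$ fixing the apex; here $\Gamma$ may be any space form group, and the quotient is $X=\Cone(\SS^{n-1})/\Gamma=\Cone(\partial X)$. Consequently, if $\partial X\neq\RP^{n-1}$ the rigid case $\theta<\pi$ is impossible, forcing $\theta=\pi$ and $X=\Cone(\partial X)$, while $\partial X=\RP^{n-1}$ yields $X=L^{n}_{\theta}/\ZZ_{2}$ for some $\theta\in(0,\pi]$ (with $\theta=\pi$ recovering $\Cone(\RP^{n-1})$). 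This matches the two conclusions of the theorem.

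The main obstacle I anticipate is the cover construction itself: verifying that the one-point completion $\tilde X$ is a genuine Alexandrov space with $\curv\ge 1$ across the added soul $\tilde s$, and not merely away from it. Concretely, one must establish Toponogov comparison for triangles whose geodesics pass through or near $\tilde s$, which amounts to controlling the infinitesimal cone structure there, i.e. identifying $\Sigma_{\tilde s}\tilde X$ with the $\Gamma$-cover of $\Sigma_{s}X$ and confirming that it is an $(n-1)$-dimensional Alexandrov space with $\curv\ge 1$. I expect this to follow from the facts that lower curvature bounds persist under metric completion and that $\tilde X\to X$ is a metric cover ramified only over the single point $s$, so that local comparison upstairs reduces to local comparison downstairs together with the conical splitting near the soul; but making the behaviour at $\tilde s$ rigorous---and, should part (1) of Theorem~\ref{thm:main} be invoked with its radius hypothesis, verifying that $\rad_{\tilde s}(\tilde X)=\pi/2$---is the technical heart of the argument.
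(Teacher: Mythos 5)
Your reduction hinges entirely on the claim that the one-point completion $\tilde{X}$ of the $\Gamma$-cover of $X\setminus\{s\}$ lies in $\alex^{n}(1)$, and the justification you sketch for it --- that ``lower curvature bounds persist under metric completion'' and that local comparison upstairs reduces to local comparison downstairs --- is false as a general principle. Away from $\tilde{s}$ the covering projection is a local isometry, so the local bound there is free; the entire difficulty sits at the single point $\tilde{s}$, where no such reduction exists. The standard counterexample is exactly of your form: complete the $k$-fold cover of a metric cone minus its apex and you obtain the cone over the $k$-fold cover of the link (e.g.\ a circle of length $2\pi k$), which violates the curvature bound at the added point although the bound holds everywhere else. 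So curvature can genuinely be destroyed at a branch point, and what must save you here is the specific fact that $\Sigma_{s}$ has dimension $n-1\ge 2$, so its $\Gamma$-cover is again a compact Alexandrov space with $\curv\ge 1$ --- together with an actual ramified-cover theorem asserting Toponogov comparison across $\tilde{s}$. For $\Gamma=\ZZ_{2}$ this is precisely what the paper imports from \cite{DGGM2018} (the double cover branched over the soul); for a general finite space form group you would have to prove or locate the analogous statement, and nothing in your proposal does so. In other words, the ``technical heart'' you flag at the end is a missing theorem, not a routine verification, and the principle you propose to verify it with is wrong. (By contrast, your worry about the radius hypothesis in \autoref{thm:main}(1) is not a real obstacle: Grove--Petersen derive $\rad_{s}=\pi/2$ from $\partial=\SS^{n-1}$ via volume rigidity in \cite{GP2018}, so citing their result in full discharges it. Your classification of the isometric $\Gamma$-actions on $L^{n}_{\theta}$, including the dichotomy $\theta<\pi$ versus $\theta=\pi=\,$the case $L^{n}_{\pi}=\Cone(\SS^{n-1})$, is correct and routine.)

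It is worth noting that the paper's own route is different and sidesteps the general cover. For space forms other than $\RP^{n-1}$ it first proves (the final Proposition, verbatim Proposition 1.1 of \cite{GP2018}) that $\rad_{s}(X)=\pi/2$, using that the constant curvature space form is the unique maximizer of volume among Alexandrov metrics with $\curv\ge 1$; then, since these space forms admit no closed embedded totally geodesic hypersurface, the dimension estimate for $E$ forces $E=A$, and \autoref{lem:cone} gives $X=\Cone(\partial X)$ directly, with no covering space needed. The $\RP^{n-1}$ case is handled by the $\ZZ_{2}$ branched cover from \cite{DGGM2018} --- i.e.\ exactly your construction specialized to $\Gamma=\ZZ_{2}$ --- inside the proof of \autoref{thm:main}(3). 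So your proposal amounts to generalizing the paper's $\ZZ_{2}$-cover trick to arbitrary $\Gamma$, trading the volume-rigidity proposition for a general branched-cover lemma: a uniform and attractive strategy if that lemma is established, but as written the central step stands unproven.
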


Acknowledgment: It is our great pleasure to thank Fernando Galaz-Garc\'ia for comments and careful reading of a preliminarily version of the paper. We also thank Luis Guijarro for his interest in our work.

\section{Proof of the Main Theorem}
We study the subset that consists of points with maximal distance to the soul $s$ first. Let
$$
E:=\{x\in \partial X| \dis(x, s)=\pi/2\}.
$$
The set $E$ is nonempty under the hypotheses of \autoref{thm:main}. The set of points on the boundary that are foot points of the soul $s$ is denoted by:
$$
A:=\{x\in \partial X| \dis(s, x)=\dis(s, \partial X)\}.
$$
The pair of sets $(E, A)$ is called dual pair of $\partial X$, which plays an important role in the study of positively curved spaces. They either coincide or are $\pi/2$-part. When they coincide, we have the following consequence of the rigidity part from Toponogov Comparison Theorem.
\begin{lemma}\label{lem:cone}
If $E=A$, then $X=\Cone(\partial X)$.
\end{lemma}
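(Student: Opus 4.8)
First I would unwind what the hypothesis $E=A$ really says, because it is far more restrictive than it appears. Points of $A$ realize the \emph{minimal} distance $\dis(s,\bx)=\rho(s)$ from the soul to the boundary, while points of $E$ realize the \emph{maximal} value $\rad_{s}(X)=\pi/2$ of $\dis(s,\cdot)$ on $\bx$. So if we pick any $p\in E=A$, then $\dis(s,p)$ simultaneously equals both extremes, giving $\dis(s,\bx)=\rho(s)=\pi/2$. Since $\rad_{s}(X)=\pi/2$ is at the same time an upper bound for $\dis(s,\cdot)$, this forces $\dis(s,x)=\pi/2$ for \emph{every} $x\in\bx$. Thus $E=A=\bx$, and the content of the lemma is really: if the soul is equidistant, at distance exactly $\pi/2$, from the whole boundary, then $X=\Cone(\bx)$.

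My plan is then to realize $X$ as a hemisphere inside a spherical suspension via doubling. Double $X$ along its boundary; by Perelman's doubling theorem the result $DX$ lies in $\alex^{n}(1)$, and it carries the reflection involution $\iota$ whose fixed-point set is the gluing locus $\bx$. Let $s_{+},s_{-}$ denote the two copies of the soul (distinct, since $s$ is interior). The folding map $DX\to X$ is $1$-Lipschitz and restricts to an isometry on each copy, so $\dis_{DX}$ agrees with $\dis_{X}$ on each copy and each copy is convex in $DX$. Now any curve from $s_{+}$ to $s_{-}$ must meet $\bx$ at some $b$, and by the first step $\dis_{DX}(s_{\pm},b)=\dis_{X}(s,b)=\pi/2$; hence $\dis_{DX}(s_{+},s_{-})\ge\pi$, while the general bound $\diam\le\pi$ on $\alex^{n}(1)$ gives equality. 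The maximal diameter (spherical suspension) theorem, which is exactly the global form of the rigidity in Toponogov's comparison theorem, then yields an isometry $DX=\SS^{0}*Y$ for some $Y\in\alex^{n-1}(1)$, with $s_{+},s_{-}$ as the two poles.

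Finally I would identify the factors and descend back to $X$. In suspension coordinates $DX=[0,\pi]\times_{\sin t}Y$ with poles at $t=0,\pi$, the equator $\{t=\pi/2\}=\{z:\dis(s_{+},z)=\pi/2\}$ is precisely $Y$, and the first step shows $\bx\subseteq Y$. Since $\iota$ is an isometry interchanging the poles, it must take the form $\iota(t,y)=(\pi-t,\,j(y))$ for an isometry $j$ of $Y$, so $\operatorname{Fix}(\iota)=\{\pi/2\}\times\operatorname{Fix}(j)$; comparing with $\operatorname{Fix}(\iota)=\bx$ and using $\dim\bx=n-1=\dim Y$ forces $\operatorname{Fix}(j)$ to be top-dimensional, whence $j=\mathrm{id}$ and $\bx=Y$. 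Then $X$, as one copy of the double, is the closed hemisphere $\{t\in[0,\pi/2]\}$, and by convexity its intrinsic metric is the restricted warped product, i.e. $X=[0,\pi/2]\times_{\sin t}\bx=\Cone(\bx)$. The hard part will be this last identification: verifying that the doubling involution acts as a pure pole-swap (equivalently, that an isometry of an $(n-1)$-dimensional Alexandrov space with an $(n-1)$-dimensional fixed-point set is the identity), and confirming that the metric induced on a single copy is exactly the spherical cone metric and not something larger.
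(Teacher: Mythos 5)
Your proposal is correct in outline, but it takes a genuinely different route from the paper's. The paper offers no written argument at all: the lemma is presented as a direct consequence of the rigidity part of Toponogov's comparison theorem, i.e., once one observes (as in your first step) that $E=A\neq\emptyset$ forces $\dis(s,b)=\pi/2$ for \emph{every} $b\in\partial X$, one applies Toponogov rigidity to hinges at $s$ to see that the geodesics from $s$ to $\partial X$ sweep out the spherical cone. You instead convert the boundary problem into a closed one: double along $\partial X$ (Perelman's doubling theorem), force $\dis_{DX}(s_+,s_-)=\pi$, and invoke the maximal-diameter suspension theorem, which is indeed the global incarnation of the same Toponogov rigidity. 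This purchase is real: the suspension structure gives $\dis(s_+,z)+\dis(s_-,z)=\pi$ for all $z\in DX$, and since every path from a point $z$ in the first copy to $s_-$ crosses $\partial X$ at some $b$ with $\dis(b,s_-)=\pi/2$, one gets $\dis(s_-,z)=\pi/2+\rho(z)$ and hence $t(z)=\pi/2-\rho(z)$ on the first copy. So the latitude function recovers the boundary distance, $\{t=\pi/2\}$ is \emph{exactly} $\partial X$ (not merely contains it), the first copy is exactly the hemisphere $\{t\le\pi/2\}$, and the radius bound $\rad_s(X)\le\pi/2$ comes for free. Note that this observation lets you bypass entirely the step you flagged as hard: you never need the pole-swap normal form $\iota(t,y)=(\pi-t,j(y))$, nor the implication that a full-dimensional fixed set forces $j=\mathrm{id}$ --- which is just as well, since that implication is the one soft spot in your write-up ($Y$ need not be a manifold, so ``$\operatorname{Fix}(j)$ is top-dimensional, hence contains an open set'' requires invariance-of-domain care; it can be rescued via non-branching of geodesics once an interior fixed point is produced, but it is cleaner to avoid). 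For completeness: the normal form itself is provable, because in a spherical suspension the minimal geodesic between the poles through a prescribed equator point is unique (join distance formula), and your final metric identification is sound, since $\cos t$ satisfies $f''=-f$ along unit-speed geodesics, making the hemisphere convex, and the restricted distance formula on $\{t\le\pi/2\}$ coincides with that of $\Cone(Y)$.
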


When $E\ne A$, we want to show they are far apart. Before proving that, let's recall: $Y\subset X$ is called $\ell$-convex if for any geodesic in $X$ with length $<\ell$ and both ends lie in $Y$ remains entirely in $Y$. Throughout this note, we call $\gamma$ a geodesic in $\partial X$, if it is locally geodesic with respect to the induced inner metric from $X$. Therefore, it is a quasigeodesic of the ambient space $X$ by the Liberman's theorem proved in \cite{PP1993}. We use $|x,y|_{X}$ or simply $|x, y|$ as shorthand for the distance between $x$ and $y$ in the metric space $X$.
\begin{prop}\label{lem:key}
For $x\in E$ and $p\in \partial X \setminus E$. Let $\gamma:[0,\ell]\rightarrow \partial X$ be a unit speed geodesic lying in $\partial X$, connecting $p=\gamma(0)$ and $x=\gamma(\ell)$. If there exists $\uparrow_{p}^{s}\in \Uparrow_{p}^{s}$ such that
\begin{equation}\label{eq:lem:key:01}
|\uparrow_{p}^{s}, \dot{\gamma}^{+}(0)|=\frac{\pi}{2},
\end{equation}
then, we have
$$
|xp|_{\partial X}\geq\frac{\pi}{2}  {\rm\ \ and\ \ }|pE|_{\partial X}\geq \frac{\pi}{2}.
$$
\end{prop}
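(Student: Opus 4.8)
The plan is to exploit the concavity of the modified distance to the soul along $\gamma$. Since $\gamma$ is a geodesic of $\partial X$, Liberman's theorem (\cite{PP1993}) makes it a quasigeodesic of the ambient space $X$; as $X$ has curvature $\ge 1$, the function $\varphi(t):=\cos\dis(s,\gamma(t))$ then satisfies $\varphi''+\varphi\ge 0$ in the barrier sense along $\gamma$ (this is the concavity of $\operatorname{md}_1\circ\dis(s,\cdot)$). I would record the relevant data: $\varphi(0)=\cos\dis(s,p)>0$, since $p\notin E$ forces $\dis(s,p)<\pi/2$; $\varphi(\ell)=0$, since $x\in E$; $\varphi\ge 0$ on $[0,\ell]$, since $\rad_s(X)=\pi/2$ bounds $\dis(s,\cdot)\le\pi/2$; and, from the first variation formula together with the hypothesis $|\uparrow_p^s,\dot\gamma^+(0)|=\tfrac{\pi}{2}$, the one-sided derivative $\varphi'(0^+)\ge 0$.

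For the first inequality I would compare $\varphi$ with the model solution $\bar\varphi(t)=\cos\dis(s,p)\,\cos t$ of $\bar\varphi''+\bar\varphi=0$ having the same data at $0$. Writing $w=\varphi-\bar\varphi$, so that $w(0)=0$, $w'(0^+)\ge 0$ and $w''+w\ge 0$, the Wronskian identity
\[
\frac{d}{dt}\bigl(w'\sin t-w\cos t\bigr)=(w''+w)\sin t\ge 0\qquad(0\le t\le \pi)
\]
shows that $w/\sin t$ is nondecreasing, hence nonnegative, on $(0,\pi)$. Therefore $\varphi(t)\ge \cos\dis(s,p)\,\cos t>0$ for $t\in[0,\pi/2)$, i.e. $\dis(s,\gamma(t))<\pi/2$ and $\gamma(t)\notin E$ there. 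Since $\gamma(\ell)=x\in E$, this forces $\ell\ge\pi/2$, which is the first claim once $\gamma$ is taken to realize $|xp|_{\partial X}$.

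For the second inequality I would pass to a nearest point: choose $x^{*}\in E$ with $|px^{*}|_{\partial X}=|pE|_{\partial X}$ and a minimizing geodesic $\sigma$ of $\partial X$ from $p$ to $x^{*}$. The decisive geometric fact is that every point of $E$ is a maximum of the concave function $\dis(s,\cdot)|_{\partial X}$, so the soul direction $\uparrow_{x^{*}}^{s}$ is normal to $\partial X$; the angle hypothesis at $p$ is the same normality statement at the other endpoint, and it is exactly what makes the initial angle of $\sigma$ with $\uparrow_p^s$ equal to $\pi/2$. Re-running the comparison of the previous paragraph along $\sigma$ then yields $|pE|_{\partial X}=|px^{*}|_{\partial X}\ge\pi/2$.

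The main obstacle is precisely this promotion from the single hypothesised geodesic to the whole dual set $E$. The forward comparison delivers the bound only when the geodesic reaching $E$ leaves $p$ at angle $\le\pi/2$ with $\uparrow_p^s$; a priori the minimizing geodesic to the nearest point of $E$ leaves ``uphill'' at an obtuse angle, and because $\varphi''+\varphi\ge 0$ is one-sided it gives no upper control on the growth of $\varphi$ from the $E$-side, so the estimate cannot simply be run backwards. Closing the argument therefore hinges on showing that the relevant geodesic actually leaves at angle exactly $\pi/2$ -- equivalently that $\uparrow^{s}$ is normal to $\partial X$ along the dual set -- together with the routine but necessary care with the barrier inequality at nondifferentiable parameters and along the cut locus of $\dis(s,\cdot)$. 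That transfer step is where I expect the real difficulty to lie.
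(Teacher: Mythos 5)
Your first inequality is correct and is, in substance, the paper's own argument: the paper applies the hinge version of the Toponogov comparison to the hinge at $p$ spanned by a geodesic $ps$ and the quasigeodesic $\gamma$, obtaining $0=\cos|sx|\ge\cos|sp|\cos\ell$ and hence $\ell\ge\pi/2$ since $p\notin E$ forces $|sp|<\pi/2$; your Sturm--Wronskian comparison for $\varphi=\cos\dis(s,\gamma(\cdot))$ with $\varphi''+\varphi\ge 0$ is precisely a proof of that packaged inequality (the concavity of $\operatorname{md}_1$ along quasigeodesics is what underlies the hinge comparison), so here you differ only in re-deriving the comparison rather than citing it. Your side remarks are also consistent with the paper: both you and the paper use $\rad_s(X)=\pi/2$ to get $|sp|<\pi/2$, and both pass from $\ell\ge\pi/2$ to $|xp|_{\partial X}\ge\pi/2$ by taking $\gamma$ minimizing.

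The genuine gap is in the second inequality, and you have misdiagnosed where the difficulty sits. Your proposed mechanism --- that normality of $\uparrow_{x^{*}}^{s}$ at a nearest point $x^{*}\in E$ ``is the same normality statement'' as the angle hypothesis at $p$ --- is a non sequitur: first variation at $x^{*}$, which is a maximum of $\dis(s,\cdot)$, controls angles at $x^{*}$, not the initial angle of $\sigma$ at $p$, and, as you yourself observe, the one-sided inequality $\varphi''+\varphi\ge 0$ cannot be integrated backwards from the $E$-end; you then explicitly concede the ``transfer step'' as open, so the proposal does not prove $|pE|_{\partial X}\ge\pi/2$. The paper performs no such transfer: the hypothesis \eqref{eq:lem:key:01} lives entirely at $p$, and in the place where the second inequality is actually used (\autoref{cor:diam}, with $p\in A$ a foot point of the soul) the first variation formula gives $|\uparrow_{p}^{s},\partial\Sigma_{p}X|=\pi/2$, so the angle condition is available for every geodesic of $\partial X$ emanating from $p$ --- in particular for a minimizing geodesic from $p$ to a nearest point of $E$ --- and $|pE|_{\partial X}\ge\pi/2$ is then just the first inequality applied to that geodesic; no information at the $E$-endpoint enters. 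You are right that, read literally, the Proposition assumes \eqref{eq:lem:key:01} only for the single given $\gamma$, so the paper's ``It follows that $|pE|_{\partial X}\ge\pi/2$'' tacitly presumes the angle condition for the minimizing geodesic to $E$; but the repair is an observation about $p$ alone (at the foot points where the Proposition is applied, every direction of $\Sigma_{p}\partial X$ makes angle exactly $\pi/2$ with $\uparrow_{p}^{s}$: all angles are $\ge\pi/2$ by minimality of $p$ on $\partial X$, and equality follows from $\dis(\uparrow_{p}^{s},\partial\Sigma_{p}X)=\pi/2$ via the same cone rigidity that underlies \autoref{lem:cone}, applied to $\Sigma_{p}X$), not any statement about directions along the dual set $E$.
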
  
\begin{proof}
Applying the hinge version of Toponogov comparison theorem for the hinge based at $p$, whose sides are $\gamma$ and a geodesic from $p$ to $s$ corresponding to $\uparrow_{p}^{s}$, we get:
$$
\cos|sx|\geq \cos|sp|\cos|xp|_{\partial X}+\sin|sp|\sin(|xp|_{\partial X})\cos|\uparrow_{p}^{s},\dot{\gamma}^{+}(0)| .
$$
Using the condition \eqref{eq:lem:key:01}, we have
$$ 
0\geq\cos|sp|\cos(\ell)
$$
Since $p\notin E$, $|ps|<\pi/2$. Therefore
$$
|xp|_{\partial X}=\ell\ge \frac{\pi}{2}.
$$
It follows that $|pE|_{\partial X}\geq\frac{\pi}{2}$.
\end{proof}
Condition \eqref{eq:lem:key:01} is fulfilled in many cases. For example if $E\ne A$ and thus we can find a footpoint $p\in A$ which is not in $E$. The first variational formula applied to the distance function $\rho=\dis_{\partial X}(\cdot)$ yields $|\uparrow_{p}^{s},\partial\Sigma_{p}X|=\frac{\pi}{2}$. Therefore \eqref{eq:lem:key:01} holds. We just proved: 
\begin{cor}\label{cor:diam}
If $E\ne A$. Let $p\in \partial X$ be a footpoint of the soul point $s$. We have
$$
|pE|_{\partial X}\geq \frac{\pi}{2}
$$
\end{cor}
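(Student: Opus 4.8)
The plan is to verify that the hypothesis \eqref{eq:lem:key:01} of \autoref{lem:key} is automatically satisfied at every footpoint, and then to feed each point of $E$ into that proposition. First I would isolate the only place where $E\ne A$ enters. If some footpoint $p\in A$ happened to lie in $E$, then $\dis(s,\partial X)=|sp|=\pi/2$; but $\rad_{s}(X)=\pi/2$ forces $|s,x|\le\pi/2$ for all $x$, so every boundary point would sit at distance exactly $\pi/2$ from $s$, giving $A=E=\partial X$ and contradicting $E\ne A$. Hence any footpoint $p$ satisfies $|sp|<\pi/2$, which is precisely the strict inequality \autoref{lem:key} requires. Since $|pE|_{\partial X}=\inf_{x\in E}|px|_{\partial X}$, it then suffices to show $|px|_{\partial X}\ge\pi/2$ for every $x\in E$.

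The substance of the argument is the orthogonality statement $|\uparrow_{p}^{s},\partial\Sigma_{p}X|=\pi/2$, realized at each boundary direction. Here I would exploit that $\partial X\in\mathcal M^{n-1}(1)$ is a smooth closed manifold: every geodesic of $\partial X$ through $p$ extends to both sides and, by Liberman's theorem \cite{PP1993}, is a quasigeodesic of $X$, while $\partial\Sigma_{p}X=\Sigma_{p}(\partial X)$ is a round unit sphere $\SS^{n-2}$. Because $p$ minimizes $\dis(s,\cdot)$ along $\partial X$, the function $t\mapsto|s,\gamma(t)|$ attains an interior minimum at $t=0$ along any such two-sided boundary geodesic $\gamma$; applying the first variation formula in both directions gives $|\uparrow_{p}^{s},w|\ge\pi/2$ and $|\uparrow_{p}^{s},-w|\ge\pi/2$ for the initial direction $w=\dot\gamma^{+}(0)$ and its antipode $-w$. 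Letting $w$ range over $\partial\Sigma_{p}X$, the nearest direction $\uparrow_{p}^{s}$ thus sits at distance $\ge\pi/2$ from the entire round sphere $\partial\Sigma_{p}X$.

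The hard part, which I expect to be the main obstacle, is upgrading this inequality to the \emph{equality} $|\uparrow_{p}^{s},w|=\pi/2$ for every $w$. A point of a curvature-$\ge 1$ space of directions cannot remain at distance $>\pi/2$ from an entire round unit sphere lying in its boundary without violating the diameter bound: for antipodal $w,-w$ one has $|\uparrow_{p}^{s},w|+|\uparrow_{p}^{s},-w|\ge\pi$, and the rigidity of the diameter-$\pi$ situation (a spherical suspension) forces equality, whence each summand is exactly $\pi/2$. This is the perpendicularity asserted just above the corollary, and it is where the smoothness of $\partial X$ is indispensable; the one-dimensional shadow of it is the elementary fact that the interior angle of $\Sigma_{p}X$ at a footpoint must equal $\pi$.

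Finally I would conclude. For each $x\in E$ choose a minimizing geodesic $\gamma$ of $\partial X$ from $p$ to $x$; its initial direction $\dot\gamma^{+}(0)$ lies in $\partial\Sigma_{p}X$, so the orthogonality yields $|\uparrow_{p}^{s},\dot\gamma^{+}(0)|=\pi/2$, which is exactly \eqref{eq:lem:key:01}. Then \autoref{lem:key} gives $|px|_{\partial X}\ge\pi/2$, and taking the infimum over $x\in E$ produces $|pE|_{\partial X}\ge\pi/2$, as desired.
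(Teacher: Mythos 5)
Your skeleton is the same as the paper's: verify the hypothesis \eqref{eq:lem:key:01} of \autoref{lem:key} at a footpoint and then apply that proposition to a minimizing $\partial X$-geodesic from $p$ to each $x\in E$; your opening observation that $E\ne A$ together with $\rad_{s}(X)=\pi/2$ forces \emph{every} footpoint to satisfy $|sp|<\pi/2$ is correct (and slightly more careful than the paper, which simply picks one footpoint not in $E$). The genuine gap sits exactly where you flagged the hard part. You assert that for ``antipodal'' $w,-w$ one has $|\uparrow_{p}^{s},w|+|\uparrow_{p}^{s},-w|\ge\pi$; that triangle inequality presupposes $|w,-w|_{\Sigma_{p}X}=\pi$. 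But $w$ and $-w$ are antipodal only in the \emph{intrinsic} round metric of $\partial\Sigma_{p}X=\Sigma_{p}(\partial X)$, and their \emph{extrinsic} distance inside $\Sigma_{p}X$ can be strictly less than $\pi$. Concretely, in the lens $\SS^{n-3}*[0,\epsilon]$ --- which occurs as a space of directions at boundary points of the paper's own model spaces $L^{n}_{\theta}$ --- two intrinsic antipodes of the round boundary sphere at latitude $t>0$ have extrinsic distance $\arccos\bigl(-1+\sin^{2}t\,(1+\cos\epsilon)\bigr)<\pi$. So there need be no pair of poles at distance $\pi$, the diameter-$\pi$ suspension rigidity has nothing to act on, and your equality step collapses; proving that boundary antipodes at a footpoint \emph{are} at distance $\pi$ in $\Sigma_{p}X$ is essentially equivalent to the perpendicularity you are trying to establish. (The equality really is load-bearing: in the hinge comparison inside \autoref{lem:key}, an angle $>\pi/2$ makes the term $\sin|sp|\sin\ell\cos\theta$ negative and no lower bound on $\ell$ follows, as you correctly sensed.)

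Two standard repairs, both close to what the paper intends by ``the first variational formula yields $|\uparrow_{p}^{s},\partial\Sigma_{p}X|=\frac{\pi}{2}$.'' First, use the quasigeodesic property of $\gamma$ more fully: $g(t)=|s,\gamma(t)|$ is semiconcave along a quasigeodesic, hence admits no convex kink, so its one-sided derivatives satisfy $g'(0^{-})\ge g'(0^{+})$, i.e.\ $\cos\theta^{-}\ge-\cos\theta^{+}$, where $\theta^{\pm}$ denote the minimal angles between $\Uparrow_{p}^{s}$ and $\dot\gamma^{\pm}(0)$; since $p$ minimizes $g$ along $\gamma$, both $\theta^{\pm}\ge\pi/2$, and then $\theta^{+}+\theta^{-}\le\pi$ pins $\theta^{+}=\theta^{-}=\pi/2$ --- note \autoref{lem:key} only requires the \emph{existence} of one such $\uparrow_{p}^{s}$, which this gives for every boundary direction. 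Second, run your suspension-rigidity idea with the correct poles: first variation gives $\dis(\uparrow_{p}^{s},\xi)\ge\pi/2$ for all $\xi\in\partial\Sigma_{p}X$; in the double of $\Sigma_{p}X$ (curvature $\ge1$ by Perelman's doubling theorem) the two mirror copies of $\uparrow_{p}^{s}$ are at distance exactly $\pi$, so the double is a spherical suspension with those poles, and every boundary direction, being fixed by the reflection, lies at distance exactly $\pi/2$ from $\uparrow_{p}^{s}$. With either repair the pointwise perpendicularity holds for all of $\partial\Sigma_{p}X$, and your concluding paragraph then goes through verbatim.
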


\begin{cor}[Case 2 of \autoref{thm:main}]\label{cor:small}
Let $X\in \alex^{n}(1)$ with non-empty boundary $\partial X$. If $\diam \partial X<\frac{\pi}{2}$ then $X$ is isometric to the spherical cone over $\partial X$
$$
X=\Cone(\partial X).
$$
\end{cor}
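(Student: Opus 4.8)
The plan is to deduce Corollary~\ref{cor:small} directly from Lemma~\ref{lem:cone} and Corollary~\ref{cor:diam}, reducing everything to the dichotomy $E = A$ versus $E \ne A$. The entire argument will be a proof by contradiction against the possibility $E \ne A$.

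First I would assume, for contradiction, that $E \ne A$. Since $A$ is always nonempty (it consists of the footpoints of the soul $s$, and the distance function $\rho = \dis(\partial X, \cdot)$ attains its maximum at $s$, so there is at least one minimizing segment from $s$ to $\partial X$), I can pick a footpoint $p \in A$. Corollary~\ref{cor:diam} then applies verbatim and yields $|pE|_{\partial X} \ge \pi/2$. On the other hand, $E$ is nonempty under the standing hypothesis that $\rad_s(X) = \pi/2$ — more precisely here I should observe that the diameter hypothesis forces $E$ to be nonempty: since some point of $\partial X$ realizes the maximal distance $\pi/2$ to $s$ when $\rad_s(X)=\pi/2$. (In the setting of this corollary one must first confirm $E \ne \emptyset$; this is exactly where the assumption that we are inside the class $\mathcal{B}'$ enters.)

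Next I would extract the contradiction from the diameter bound. Because $E \subset \partial X$ is nonempty, the inequality $|pE|_{\partial X} \ge \pi/2$ produces a pair of points in $\partial X$ — namely $p$ and any point of $E$ — whose intrinsic boundary distance is at least $\pi/2$. This directly contradicts the hypothesis $\diam \partial X < \pi/2$. Hence the assumption $E \ne A$ is untenable, so $E = A$.

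With $E = A$ established, Lemma~\ref{lem:cone} immediately gives $X = \Cone(\partial X)$, completing the proof. The step I expect to require the most care is not the contradiction itself, which is short, but verifying the non-emptiness of $E$: one must make sure that the footpoint set $A$ and the maximal-distance set $E$ are both genuinely populated before invoking Corollary~\ref{cor:diam}. The nonemptiness of $A$ follows from compactness and the existence of minimizing segments to the boundary, while the nonemptiness of $E$ rests on the radius condition $\rad_s(X) = \pi/2$ that defines the class in which this corollary lives.
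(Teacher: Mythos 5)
Your proof is correct and takes essentially the same route as the paper's: assume $E\ne A$, invoke \autoref{cor:diam} to produce $p\in A$ and points of $E$ at intrinsic boundary distance $\ge \pi/2$, contradict $\diam\partial X<\pi/2$, conclude $E=A$, and finish with \autoref{lem:cone}. Your added care about the nonemptiness of $E$ --- which indeed requires the standing hypothesis $\rad_{s}(X)=\pi/2$ from \autoref{thm:main}, not stated explicitly in the corollary --- is a legitimate point the paper leaves implicit.
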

\begin{proof}
If $E\ne A$, then by \autoref{cor:diam}, there exists $x\in E$ and $p\in A$ such that $|px|_{\partial X}\ge \pi/2$. Hence we get a contradiction to the assumption $\diam(\partial X)<\pi/2$. Therefore $E=A$. By \autoref{lem:cone}, the conclusion holds.
\end{proof}

Next, we study the rigidity case when the lower bound of $\ell$ in \autoref{lem:key} is achieved. Namely, $\ell=\pi/2$.
\begin{prop}\label{prop:key:rigidity}
Suppose $\gamma:[0, {\pi}/{2}]\rightarrow \partial X$ is a geodesic in $\partial X$ with $\gamma(0)=p$, satisfying $|sp|<\frac{\pi}{2}$ and $x=\gamma({\pi}/{2})\in E$ with$|\uparrow_p^s,\dot{\gamma}^{+}(0)|=\frac{\pi}{2}.$ Then, 
$$|\uparrow^s_x,\dot{\gamma}^{-}(\frac{\pi}{2})|=|sp|.$$
\end{prop}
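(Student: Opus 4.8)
The plan is to determine the angle $\beta:=|\uparrow^s_x,\dot\gamma^-(\pi/2)|$ by trapping it between two bounds, each equal to $|sp|$: a lower bound from the hinge comparison based at $x$, and a matching upper bound coming from the fact that the hypotheses force \emph{equality} in the hinge comparison based at $p$. To see the equality, I would repeat the computation of \autoref{lem:key} with $\ell=\pi/2$: using $|sx|=\pi/2$ (since $x\in E$) and the angle hypothesis, the hinge inequality collapses to $0\ge 0$, so the hinge at $p$ is rigid. More usefully, applying the same hinge comparison to the truncated curve $\gamma|_{[0,t]}$ — which is again a quasigeodesic of $X$ by Liberman's theorem, with the same initial direction $\dot\gamma^+(0)$ and hence the same angle $\pi/2$ against $\uparrow^s_p$ — yields
\[
\cos|s,\gamma(t)|\ \ge\ \cos|sp|\cos t\qquad\text{for all }t\in[0,\tfrac{\pi}{2}],
\]
an inequality that becomes an equality at $t=\pi/2$.

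For the lower bound I would apply the hinge version of Toponogov once more, now based at $x$, with sides the geodesic $[xs]$ of length $\pi/2$ and the reversed quasigeodesic $\gamma$ of length $\pi/2$, and included angle $\beta$. Since the third side is $[sp]$, the comparison gives $\cos|sp|\ge\cos(\pi/2)\cos(\pi/2)+\sin(\pi/2)\sin(\pi/2)\cos\beta=\cos\beta$, whence $\beta\ge|sp|$.

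The upper bound is the heart of the matter, and it is where the equality case enters. Writing $f(t)=|s,\gamma(t)|$, the function $t\mapsto\cos f(t)-\cos|sp|\cos t$ is nonnegative on $[0,\pi/2]$ and vanishes at the right endpoint, so its left derivative at $\pi/2$ is $\le 0$. Computing this derivative, and invoking the first variation formula for the quasigeodesic $\gamma$ at its endpoint — which, reading off the backward direction $\dot\gamma^-(\pi/2)$, gives $f'(\pi/2^-)=\cos\beta$ — I obtain $-\cos\beta+\cos|sp|\le 0$, i.e.\ $\beta\le|sp|$. Combined with the previous step this forces $\beta=|sp|$. (As a sanity check, the comparison spherical triangle with sides $|sp|$, $\pi/2$, $\pi/2$ and right angle at $\tilde p$ indeed has angle $|sp|$ at $\tilde x$, by the spherical law of cosines.)

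I expect the one delicate point to be the rigorous justification of the upper bound, where one must use the equality/rigidity case of Toponogov's comparison \emph{for quasigeodesics} rather than for genuine geodesics — equivalently, that the first variation formula holds with equality for $\gamma$ at the endpoint $x$ — and must select, in case $\Uparrow^s_x$ is not a single direction, the representative that realizes the comparison. The lower bound, by contrast, is an unconditional application of the hinge comparison and needs no such care.
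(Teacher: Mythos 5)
Your proposal is correct and takes essentially the same route as the paper's own proof: the lower bound $\beta\ge|sp|$ comes from the hinge comparison at $x$ using $|sx|=|xp|_{\partial X}=\pi/2$, and the upper bound from applying the hinge comparison at $p$ to the truncated quasigeodesics $\gamma|_{[0,\pi/2-t]}$ and passing to the limit $t\to 0$ via the first variation formula at $x$ — your repackaging of that limit as the sign of the left derivative of $t\mapsto\cos f(t)-\cos|sp|\cos t$ is the identical computation. One small correction to your closing remark: no rigidity case of Toponogov is needed anywhere, only the comparison inequality for quasigeodesics together with the exact first variation formula (i.e.\ differentiability of the distance function $d_s$ in the direction $\dot\gamma^{-}(\pi/2)$, which exists since quasigeodesics have well-defined directions), and the paper glosses this point in exactly the same way you do.
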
	
\begin{proof}
Using the hinge version of Toponogov comparison theorem for the hinge based at $x\in E$, whose sides are geodesic connecting $x$ and $s$ and $\gamma[0,{\pi}/{2}]$, we have:
$$
\cos|sp|\geq \cos|xs|\cos|xp|_{\partial X}+\sin|xs|\sin|xp|_{\partial X}\cos|\uparrow_{x}^{s},\dot{\gamma}^{-}(\frac{\pi}{2})|
$$
Using the fact $|xs|=|xp|_{\partial X}={\pi}/{2}$, we have
$$ 
\cos|sp|\geq \cos|\uparrow_{x}^{s},\dot{\gamma}^{-}(\frac{\pi}{2})|,
$$
which implies
\begin{equation}\label{eq:prop:key:rigidity:01}
|sp|\le|\uparrow_{x}^{s},\dot{\gamma}^{-}(\frac{\pi}{2})|.
\end{equation}
On the other hand, for any small $\epsilon>0$, using the hinge version of Toponogov comparison theorem for the hinge based at $p$, whose sides are geodesics connecting $p$ and $s$ and $\gamma[0,{\pi}/{2}-t]$ for $t\in [0,\epsilon]$, we have
$$
\cos|s\gamma(\frac{\pi}{2}-t)|\geq \cos|sp|\cos(\frac{\pi}{2}-t)+\sin|sp|\sin(\frac{\pi}{2}-t)\cos|\uparrow_{p}^{s},\dot{\gamma}^{+}(0)| 
$$
Using the fact $|\uparrow_{p}^{s},\dot{\gamma}^{+}(0)|={\pi}/{2}$, we get
$$ 
\cos|s\gamma(\frac{\pi}{2}-t)|\geq  \cos|sp|\cos(\frac{\pi}{2}-t)
$$
that is
$$
\frac{\cos|s\gamma(\frac{\pi}{2}-t)|}{\cos(\frac{\pi}{2}-t)}\geq\cos|sp|.
$$
Therefore by the first variational formulae as $t\to 0$, we have
$$
\cos|sp|\leq\cos|\uparrow_{x}^{s},\dot{\gamma}^{-}(\frac{\pi}{2})|.
$$
that is:
\begin{equation}\label{eq:prop:key:rigidity:02}
|sp|\geq|\uparrow_{x}^{s},\dot{\gamma}^{-}(\frac{\pi}{2})| .
\end{equation}
Combinning \eqref{eq:prop:key:rigidity:01} and \eqref{eq:prop:key:rigidity:02}, we get
$$
|sp|=|\uparrow_{x}^{s},\dot{\gamma}^{-}(\frac{\pi}{2})|.
$$
\end{proof}

\begin{lemma}[Lemma 3.1 in \cite{GP2018}]\label{lem:E_convex}
$E$ is $\pi$-convex in $\partial X$. For any geodesic $\gamma:[0,l]\rightarrow \partial X$ that entirely lies in $E$, we have:
$$
|\uparrow_{\gamma(t)}^s,\dot{\gamma}^{\pm}(t)|=\frac{\pi}{2} \qquad \forall t\in[0,l]
$$ 
\end{lemma}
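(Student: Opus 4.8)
The plan is to reduce both assertions to a scalar differential inequality for the function $\phi(t):=\cos|s,\gamma(t)|$ along a geodesic $\gamma$ of $\partial X$. As already noted in the text, such a $\gamma$ is a quasigeodesic of the ambient space $X$ by Liberman's theorem, so the quasigeodesic comparison in curvature $\ge 1$ applies to the distance function $|s,\cdot|$: the function $t\mapsto \mdk(|s,\gamma(t)|)$ with $\kappa=1$ satisfies $(\mdk\circ|s,\gamma(\cdot)|)''\le 1-\mdk(|s,\gamma(\cdot)|)$ in the barrier sense. Since $\mdk(r)=1-\cos r$ when $\kappa=1$, this is precisely $\phi''+\phi\ge 0$ in the barrier sense; on the model sphere equality holds, which is the source of the rigidity below.

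For the $\pi$-convexity, let $\gamma:[0,\ell]\to\partial X$ have both endpoints in $E$ with $\ell<\pi$. The hypothesis $\rad_s(X)=\pi/2$ gives $|s,\cdot|\le\pi/2$ on all of $X$, hence $\phi\ge 0$ on $[0,\ell]$, while $\phi(0)=\phi(\ell)=0$ because the endpoints lie in $E$. Now the operator $L=\tfrac{d^2}{dt^2}+1$ satisfies the maximum principle on every interval shorter than $\pi$, since its first Dirichlet eigenvalue on $[0,\ell]$ equals $(\pi/\ell)^2>1$; equivalently $g(t)=\sin(t+\epsilon)$ is a positive solution of $Lg=0$ on $[0,\ell]$ for a suitable $\epsilon>0$, which is possible precisely when $\ell<\pi$, and writing $\phi=g\cdot(\phi/g)$ converts $L\phi\ge 0$ into the monotonicity of $g^{2}(\phi/g)'$. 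Combined with $\phi(0)=\phi(\ell)=0$ this forces $\phi\le 0$, so together with $\phi\ge 0$ we conclude $\phi\equiv 0$, i.e. $|s,\gamma(t)|\equiv \pi/2$ and $\gamma$ stays in $E$. This is exactly where $\ell<\pi$ enters, and I expect the careful, barrier-sense invocation of this Sturm-type maximum principle to be the main obstacle.

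For the angle identity, suppose now $\gamma$ lies entirely in $E$, so $\phi\equiv 0$ and $|s,\gamma(t)|\equiv \pi/2$ is constant; in particular both one-sided derivatives of $t\mapsto |s,\gamma(t)|$ vanish. The first variation formula for the distance to $s$ gives $\tfrac{d^{+}}{dt}|s,\gamma(t)|=-\cos|\uparrow_{\gamma(t)}^s,\dot\gamma^{+}(t)|$, where $\uparrow_{\gamma(t)}^s$ is the direction of a shortest geodesic to $s$ nearest to $\dot\gamma^{+}(t)$, and applying it to the reversed geodesic yields the analogous statement for $\dot\gamma^{-}(t)$. Since the left-hand sides vanish, we obtain $|\uparrow_{\gamma(t)}^s,\dot\gamma^{\pm}(t)|=\tfrac{\pi}{2}$ for all $t\in[0,\ell]$, as claimed. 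The only point needing care here is the standard one of handling possible non-uniqueness of shortest geodesics from $\gamma(t)$ to $s$, which the first variation formula already accommodates.
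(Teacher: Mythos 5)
You should first note that the paper does not actually prove this lemma; it is quoted verbatim from \cite{GP2018}, so your attempt can only be checked against the statement and the way it is used downstream. Your first half (the $\pi$-convexity) is correct: by Liberman's theorem $\gamma$ is a quasigeodesic of $X$, Petrunin's comparison for quasigeodesics in curvature $\ge 1$ does give $\phi''+\phi\ge 0$ in the barrier sense for $\phi(t)=\cos|s,\gamma(t)|$, the standing hypothesis $\rad_s(X)=\pi/2$ gives $\phi\ge 0$, and your Sturm-type maximum principle with $g(t)=\sin(t+\epsilon)$ on intervals of length $<\pi$ correctly forces $\phi\equiv 0$, i.e.\ $|s,\gamma(t)|\equiv\pi/2$. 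The barrier-sense bookkeeping you flag is standard and harmless here: $\phi$ is Lipschitz, the inequality holds distributionally, and the monotonicity of $g^2(\phi/g)'$ survives. This is a clean analytic route, somewhat different in flavor from the synthetic Toponogov-hinge computations used elsewhere in the paper (e.g.\ \autoref{lem:key}), but it proves exactly the $\pi$-convexity claimed.

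The genuine gap is in the second half. The first variation formula expresses the one-sided derivative of $t\mapsto|s,\gamma(t)|$ as $-\cos$ of the \emph{minimal} angle over $\Uparrow_{\gamma(t)}^{s}$, so the vanishing of both one-sided derivatives only yields $\min_{\uparrow\in\Uparrow_{\gamma(t)}^{s}}|\uparrow,\dot{\gamma}^{\pm}(t)|=\pi/2$: equality for \emph{some} direction, and no upper bound at all for the others. Your closing remark that the formula ``already accommodates'' non-uniqueness conflates these two statements; but the lemma, read with the paper's conventions (compare ``there exists $\uparrow_{p}^{s}\in\Uparrow_{p}^{s}$'' in \autoref{lem:key} and ``Fix any $\uparrow_{x}^{s}\in\Uparrow_{x}^{s}$'' in Claim~3 of \autoref{thm:key:dimension}), asserts the identity for \emph{every} direction to $s$. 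To close the gap: for any $\uparrow\in\Uparrow_{\gamma(t)}^{s}$, the quasigeodesic hinge comparison at $\gamma(t)$ (the same computation as in \autoref{lem:key}, now with $|s,\gamma(\cdot)|\equiv\pi/2$ on both sides) gives $0\ge\sin(\tau)\cos|\uparrow,\dot{\gamma}^{\pm}(t)|$, hence $|\uparrow,\dot{\gamma}^{\pm}(t)|\ge\pi/2$ for both branches; and since $\gamma$ is a quasigeodesic, $|\dot{\gamma}^{+}(t),\dot{\gamma}^{-}(t)|=\pi$, so by the maximal diameter theorem $\Sigma_{\gamma(t)}X$ is a spherical suspension with poles $\dot{\gamma}^{\pm}(t)$, whence $|\xi,\dot{\gamma}^{+}(t)|+|\xi,\dot{\gamma}^{-}(t)|=\pi$ for every $\xi\in\Sigma_{\gamma(t)}X$; the two facts together force both angles to equal $\pi/2$ for every $\uparrow$. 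Note that the plain triangle inequality gives the sum $\ge\pi$, which is the wrong direction, so the suspension rigidity (or an adjacent-angles lemma for quasigeodesics) is genuinely needed and not a routine patch of your argument.
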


Hence by \autoref{lem:E_convex}, $E\subset \partial X$ is a convex set and in particular an Alexandrov space. We want to study the dimension of $E$. The dimension puts a strong restriction on the boundary topology as we will see.

\begin{theorem}\label{thm:key:dimension}
Let $X\in\alex^{n} (1)$ with boundary $\partial X$. If $\partial X$ is isometric to a compact rank one symmetric space (CROSS). Then $E$ is a closed submanifold of $\partial X$ without boundary with dimension $\ge (n-2)$.
\end{theorem}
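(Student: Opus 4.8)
The plan is to treat $E$ first as a convex subset, then promote it to a closed totally geodesic submanifold, and finally pin down its dimension through the footpoint duality.

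\emph{Submanifold structure and absence of boundary.} By \autoref{lem:E_convex} the set $E$ is $\pi$-convex in the smooth manifold $\partial X$, and since $E=\{x\in\partial X:\dis(s,x)=\pi/2\}$ is a level set of a continuous function it is closed, hence compact. I would invoke the convexity theory of Cheeger--Gromoll: a closed convex subset of a complete Riemannian manifold is an embedded submanifold, possibly with boundary, whose interior is smooth and totally geodesic; write $k=\dim E$. To remove the boundary I would exploit the symmetry of the angle identity in \autoref{lem:E_convex}: the condition $|\uparrow_{\gamma(t)}^{s},\dot\gamma^{\pm}(t)|=\pi/2$ holds for \emph{both} one-sided directions along every geodesic of $E$, so the admissible set of directions $\Sigma_xE\subset\Sigma_x\partial X$ is antipodally symmetric at each $x$. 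A convex body whose space of directions is antipodally symmetric admits no supporting half-space, so its manifold boundary is empty; thus $E$ is totally geodesic as a whole, and being a compact complete totally geodesic submanifold of the symmetric space $\partial X$ it is a closed submanifold without boundary.

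\emph{Dimension bound --- the main obstacle.} The remaining, and genuinely harder, task is $k\ge n-2$. If $E=A$ then \autoref{lem:cone} gives $X=\Cone(\partial X)$, whence every boundary point lies at distance $\pi/2$ from $s$, so $E=\partial X$ and $k=n-1$. Assume therefore $E\ne A$ and fix a footpoint $p\in A$, so that $\dis(s,p)<\pi/2$ and, by the first variation of $\dis_{\partial X}$, $|\uparrow_p^{s},\partial\Sigma_pX|=\pi/2$. By \autoref{lem:key} (equivalently \autoref{cor:diam}) we get $|pE|\ge\pi/2$; for every CROSS other than the round sphere $\diam\partial X=\pi/2$, so in fact $E\subseteq S_{\pi/2}(p)$, the set of points at maximal distance from $p$. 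I would then study the ``polar'' map $\SS^{n-2}=\partial\Sigma_pX\to\partial X$, $u\mapsto\gamma_u(\pi/2)$, taken along the unit-speed geodesics of $\partial X$ issuing from $p$, using \autoref{prop:key:rigidity} to fix the incidence angle $|\uparrow_{x}^{s},\dot\gamma^{-}(\pi/2)|=\dis(s,p)$ at each point $x\in E$ that is reached, and attempt to show that its image exhausts an $(n-2)$-dimensional piece of $E$.

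\emph{Where the difficulty concentrates.} This polar map is precisely the normal map of $\partial X$ onto the cut locus of $p$, and its image is a totally geodesic submanifold whose dimension is dictated by the isometry type of the CROSS: it is $\RP^{n-2}$ when $\partial X=\RP^{n-1}$ (dimension $n-2$, exactly as wanted), but only $\CP^{m-1},\HP^{m-1},\SS^{8}$ when $\partial X=\CP^{m},\HP^{m},\CaP^{2}$, of dimension $n-3$ or smaller. Hence the clean codimension-one count succeeds precisely for $\SS^{n-1}$ (the case of \cite{GP2018}) and for $\RP^{n-1}$, whereas for the remaining CROSSes the only way to attain $k\ge n-2$ is to have $E=A$. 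I therefore expect to organize the argument as a case analysis on the type of $\partial X$, the crux being to rule out $E\ne A$ for $\CP^{m},\HP^{m},\CaP^{2}$: there I would compare the join-type decomposition $\partial X\simeq E*A$ forced by the duality of the pair $(E,A)$ against the actual topology of the CROSS, which admits no such decomposition. Excluding this intermediate scenario --- a totally geodesic $E$ of dimension strictly between that of a cut locus and $n-2$ --- is the principal obstacle, and it is exactly this dichotomy ($k=n-2$ versus $E=A$) that feeds the classification in \autoref{thm:main}.
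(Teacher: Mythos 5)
Your proposal has two genuine gaps, and the second sits exactly where the paper's proof does its real work. First, the no-boundary step: from \autoref{lem:E_convex} you infer that $\Sigma_xE$ is antipodally symmetric, but that lemma only constrains the angle between $\uparrow^{s}_{\gamma(t)}$ and the two directions of a geodesic \emph{already lying in} $E$; it says nothing about which directions of $\Sigma_x\partial X$ are tangent to $E$, so at a putative boundary point of the convex set $E$ no symmetry of $\Sigma_xE$ follows, and the ``no supporting half-space'' conclusion is a non sequitur. The paper removes the boundary differently: in a non-spherical CROSS all geodesics close up with period $\pi$, and its Claim 1 (along a closed geodesic based at a point of $E$ but not contained in $E$, the function $\dis(s,\gamma(t))$ has a \emph{unique} minimum, at parameter $\pi/2$) forces any closed geodesic through two distinct points of $E$ to lie entirely in $E$ --- applying Claim 1 from both basepoints would produce two distinct ``unique'' minima. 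A geodesic in $E$ normal to $\partial E$ at a boundary point then extends to a closed geodesic that must simultaneously stay in $E$ and leave it, a contradiction.

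Second, the dimension bound. Your reduction $E\subseteq S_{\pi/2}(p)$ when $E\ne A$ is fine, but it drives you to the conclusion that for $\CP^{m},\HP^{m},\CaP^{2}$ the bound $\dim E\ge n-2$ can hold only if $E=A$, and you then need to exclude $E\ne A$ via an asserted join decomposition $\partial X\simeq E*A$ ``forced by duality.'' No such decomposition is forced: duality of the pair $(E,A)$ yields mutual distance bounds, not a metric join structure on a smooth CROSS, and you yourself flag this exclusion as unresolved --- so the theorem, whose statement is unconditional, is not proved (even for $\RP^{n-1}$ you only ``attempt to show'' that the polar image exhausts an $(n-2)$-dimensional piece of $E$). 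The paper's mechanism is entirely different and local at each $x\in E$: smoothness of $\partial X$ makes $\partial\Sigma_xX$ the round $\SS^{n-2}$, so Grove--Petersen rigidity gives $\Sigma_xX=\SS^{n-3}*[0,\theta]$; running closed geodesics through $x$ and combining Claim 1 with \autoref{prop:key:rigidity}, one sees $\uparrow^{s}_{x}$ is unique and equidistant from antipodal pairs in $\partial\Sigma_xX$, hence is the soul of the lens, whence every closed geodesic whose initial direction lies in the $\SS^{n-3}$ (distance $\pi/2$ from the soul) stays in $E$. This produces an $(n-2)$-parameter family of geodesics in $E$ through each of its points, giving $\dim E\ge n-2$ uniformly for all CROSSes with no case analysis; the elimination of a small $E$ for $\CP^{m},\HP^{m},\CaP^{2}$ happens only afterwards, in the proof of \autoref{thm:main}, via the nonexistence of totally geodesic hypersurfaces --- the step you tried to front-load into this theorem.
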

\begin{proof} 
If $\partial X=\SS^{n-1}$, then the claim follows from \cite{GP2018}. Suppose $\partial X$ is not a sphere, then we know all the geodesics are closed in $\partial X$ with a common period $\pi$. For any $x\in E$, let $\gamma:[0,\pi]\rightarrow\partial X$ be a closed geodesic with $\gamma(0)=\gamma(\pi)=x$ in $\partial X$. Therefore, $\gamma$ is a quasigeodesic in $X$. Suppose that $\gamma$ does not lie entirely in $E$. Then 

{\bf Claim1:} $t_{0}=\pi/2$ is the unique minimum point for the function $\dis(s, \gamma(t))$ for $t\in [0, \pi]$, i.e.
\begin{equation}\label{eq:claim1:-1}
|\gamma(t_{0}),s|<|\gamma(t),s| \qquad \forall t\in[0, \pi/2)\cup (\pi/2, \pi],
\end{equation}
Moreover
\begin{equation}\label{eq:claim1:0}
|\uparrow^{s}_{\gamma(\frac{\pi}{2})}, \dot{\gamma}^{+}(\frac{\pi}{2})|=|\uparrow^{s}_{\gamma(\frac{\pi}{2})}, \dot{\gamma}^{-}(\frac{\pi}{2})|=\frac{\pi}{2}.
\end{equation}
\begin{proof}[Proof of Claim 1]
Since $\gamma$ does not lie entirely in $E$, there exists $0<t_{0}<\pi$, such that $\gamma(t_{0})$ is closest to $s$ along $\gamma$. By the first variational formula, we have
$$|\dot{\gamma}^{\pm}(t_{0}), \uparrow_{\gamma(t_{0})}^{s}|=\pi/2.$$ 
Therefore one can apply \autoref{lem:key} to the triangle formed by the quasigeodesic $\gamma[0, t_{0}]$ and geodesics $ps$ and $sx$. Therefore we have $t_{0}\ge \pi/2$. Apply \autoref{lem:key} to the triangle formed by the quasigeodesic $\gamma[t_{0}, \pi]$ and geodesics $ps$ and $sx$, one get $\pi-t_{0}\ge \pi/2$. Therefore $t_{0}=\pi/2$.
\end{proof}

Therefore by \autoref{prop:key:rigidity}, we have:
\begin{equation}\label{eq:thm:key:01}
|\uparrow^s_x,\dot{\gamma}^{+}(0)|=|s\gamma(\frac{\pi}{2})|.
\end{equation}

Let $\gamma:[0,\pi]\rightarrow\partial X$ be a closed geodesic with $\gamma(0)=\gamma(\pi)=x$ in $\partial X$, if $|\dot{\gamma}(0),\uparrow_{\gamma_{(0)}}^s|<\frac{\pi}{2}$, then by \autoref{prop:key:rigidity},  the only point on $\gamma$ with distance $\frac{\pi}{2}$ to $s$ is $x=\gamma(0)$.

{\bf Claim 2:}
Let $\gamma:[0,\pi]\rightarrow\partial X$ be a closed geodesic with $\gamma(0)=\gamma(\pi)=x$ in $\partial X$. If $|\dot{\gamma}^{+}(0),\uparrow_{\gamma_{(0)}}^s|=\frac{\pi}{2}$, then $\gamma$ entirely lies in $E$.
\begin{proof}[Proof of Claim 3]
If not, we come back to the case in Claim 1 and Claim 2, it follows that
$$|\uparrow^s_x,\dot{\gamma}^{+}(0)|=|s\gamma(\frac{\pi}{2})|<\frac{\pi}{2},$$
a contradiction.
\end{proof} 
By \cite{GP2018}, we know $\Sigma_{x}(X)=L^{n-1}(\theta)=\SS^{n-3}*[0, \theta]$ for some $\theta\in (0, \pi]$. Any closed geodesic based at $x\in E$ of length $\pi$ satisfies
$$
\dot{\gamma}^{+}(0), \dot{\gamma}^{-}(\pi)\in \partial \Sigma_{x}(X).
$$

{\bf Claim 3:} $\uparrow_{x}^{s}$ is unique, i.e. $\Uparrow_{x}^{s}=\{\uparrow_{x}^{s}\}$.
\begin{proof}[Proof of Claim 3]
Fix any $\uparrow_{x}^{s}\in \Uparrow_{x}^{s}$. Then any geodesic $\gamma([0, \pi])$ with $\dot{\gamma}^{+}(0)=\uparrow_{x}^{s}$ is a closed geodesic with period $\pi$ and lies entirely in $\partial X$. By \eqref{eq:thm:key:01}, we know
$$
|\uparrow^s_x,\dot{\gamma}^{+}(0)|=|s\gamma(\frac{\pi}{2})|=|\uparrow^s_x,\dot{\gamma}^{-}(0)|.
$$
By choosing sufficiently many initial direction of closed geodesics based at $x$, one gets $\uparrow^s_x$ has to have equal distance to any antipodal points in $\partial \Sigma_{x}(X)=\partial L^{n-1}_{\theta}=\SS^{2}$. Therefore, $\uparrow^s_x$ has to be the unique soul of $\Sigma_{x}(X)$.
\end{proof}
By Claim 2, Claim 3 above, one get $\gamma[0, \pi]\subset E$ if and only if $\dot{\gamma}^{+}(0)\in \SS^{n-3}$ whenever $\theta<\pi$ or $\gamma$ always lie in $E$ if $\theta=\pi$. In either case, we have $\dim (E)\ge n-2$.  Since $E\subset \partial X$ is closed and convex, hence in its interior $E$ is locally a submanifold. Therefore, it remains to show $E$ has no boundary. This follows easily from Claim 1. In fact let $x, y \in E$ and $x\ne y$, if the closed geodesic determined by $x, y$ does not lie entirely in $E$, denoted by $\gamma[0, \pi]$, with $\gamma(0)=x, \gamma(\delta)=y$. Then by Claim 1, there exists a unique $\gamma(\pi/2)$ is the unique point closest to the soul $s$. Play the same game with base point $y$, one get a contradiction. Therefore if $E$ has boundary, which is necessarily convex, take $y$ to be the boundary point and $x$ be a point in $E$ that near $y$, such that the closed geodesic determined by $x$ and $y$ is normal to the boundary of $E$. The closed geodesic must leaves $E$, which contradict to we just proved.
\end{proof}

Now we are ready to prove \autoref{thm:main} Case (3). 
\begin{proof}
If $\partial X=\CP^{m}, \HP^{m}, \CaP^{2}$, then clearly it admits no totally geodesic submanifold with codimension $1$, except the case when $\partial X=\CP^{1}$. However in this case if $E$ is the equator of $\CP^{1}$, then it leaves no room for the set $A$, since the diameter of $\CP^{1}$ is $\pi/2$. Therefore by the dimension estimate in \autoref{thm:key:dimension}, we know $E=\partial X$ for all the spaces listed above. Then we can apply \autoref{lem:cone} to get the desired rigidity $X=\Cone(\partial X)$. 

Now we are left with the case $\partial X=\RP^{n-1}$. In this case we have either $E=\partial X$, then by the same reason, $X=\Cone(\partial X)$. Or we have $E=\RP^{n-2}\subset \partial X$ and therefore $A=\{p\}$ the dual set of $E$ inside $\RP^{n-1}$. By \cite{DGGM2018}, we can take the double cover of $X$, branched over the soul point $s$, therefore we get the great sphere $\SS^{n-2}$ as the corresponding set of maximum distance to the soul, and the set of foot points of the soul point are two points in the antipodal position on the boundary $\SS^{n-1}$. Denote by $\theta$ the distance $|s,p|$. Then by \autoref{prop:key:rigidity} and Grove-Petersen's result \autoref{thm:main}(1), we recover the Alexandrov lens structure $L^{n}_{2\theta}/\ZZ_{2}$. This finishes the proof.
\end{proof}

First, we note that $\partial X$ is a space form of constant curvature $1$, therefore it the unique manifold which realized the maximal volume among all Alexandrov metrics with lower curvature bound $1$, then the proof goes verbatim as Proposition 1.1 in \cite{GP2018}. The authors suspect that the same conclusion holds for CROSSes, but we have no proof so far.
\begin{prop}
Let $X\in \alex^n(1)$ with boundary $\partial X$ isometric to the a spherical space form other than $\RP^{n-1}$, then
$$
\rad_s(X)=\frac{\pi}{2}.
$$
\end{prop}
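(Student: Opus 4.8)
The plan is to verify the one hypothesis that makes the argument of Proposition 1.1 in \cite{GP2018} applicable --- namely that the boundary realizes the maximal volume in its homeomorphism class --- and then to run that argument verbatim. Write $N=\partial X$, so $N$ is diffeomorphic to $\SS^{n-1}/\Gamma$ with $\Gamma=\pi_1(N)$ finite and acting freely by deck transformations. First I would establish the volume characterization: among all metrics $g$ on the manifold $N$ with $(N,g)\in\alex^{n-1}(1)$, the round space-form metric is the unique maximizer of volume, with value $\vol(\SS^{n-1})/|\Gamma|$. This follows by lifting to the universal cover: any such $g$ pulls back to a $\Gamma$-invariant metric $\tilde g$ giving a closed $(n-1)$-dimensional Alexandrov space of curvature $\ge 1$, so volume rigidity (Bishop--Gromov) yields $\vol(N,g)=\vol(\tilde g)/|\Gamma|\le \vol(\SS^{n-1})/|\Gamma|$, with equality if and only if $\tilde g$ is the round metric, i.e. if and only if $g$ is the round space-form metric. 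Since $\partial X$ is given as the round space form, it attains this maximum; this is precisely the role played by $\SS^{n-1}$ in \cite{GP2018}.

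Second, with the maximal-volume property in hand, I would reproduce the proof of Proposition 1.1 in \cite{GP2018}. Recall its mechanism. The soul $s$ is interior, so $\Sigma_{s}X\in\alex^{n-1}(1)$ has no boundary; one parametrizes $X$ by the gradient exponential map $\gexp_{s}$, letting $r(\xi)$ be the distance from $s$ at which the radial curve in direction $\xi\in\Sigma_{s}X$ meets $\partial X$, so that $\rad_{s}(X)=\max_{\xi}r(\xi)$ (the maximum of $\dis(s,\cdot)$ is attained on $\partial X$, since any radial curve reaching an interior maximum could be prolonged). Bishop--Gromov comparison along these radial directions, combined with the Toponogov hinge comparison at $s$, bounds $\vol(\partial X)$ from above by a model quantity depending on $r(\cdot)$ and on $\vol(\Sigma_{s}X)$, in such a way that the bound can be saturated only when the comparison is rigid. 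The maximality of $\vol(\partial X)$ from the first step forces every comparison inequality to be an equality, and this rigidity pins the radial function to the constant value $r\equiv\pi/2$, whence $\rad_{s}(X)=\pi/2$.

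The hard part is exactly this equality analysis inside the volume comparison: one must show that saturating the boundary volume forces $r\equiv\pi/2$ rather than merely constraining it, and here the \emph{uniqueness} of the maximal-volume realizer from the first step --- not just the numerical bound --- is what is used, precisely as in \cite{GP2018}. Two points I would record. The argument genuinely needs $\partial X$ to carry the constant-curvature-$1$ metric, so that it is the maximizer; this is the standing hypothesis. It also explains the exclusion of $\RP^{n-1}$: there the rigid model produced by the comparison is the lens quotient $L^{n}_{\theta}/\ZZ_{2}$ rather than a spherical cone, so Proposition 1.1 of \cite{GP2018} does not transfer literally. That case is instead treated through the $\ZZ_{2}$ branched double cover of \cite{DGGM2018}, which turns the boundary into the round $\SS^{n-1}$, reduces matters to the maximal case, and still returns $\rad_{s}(X)=\pi/2$.
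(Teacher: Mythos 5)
Your proposal matches the paper's proof, which consists of exactly the two steps you give: observe that the round space form is the unique volume maximizer among Alexandrov metrics with curvature $\ge 1$ on the boundary (you supply the universal-cover plus Bishop--Gromov rigidity argument that the paper leaves implicit), and then run Proposition~1.1 of \cite{GP2018} verbatim, with the $\RP^{n-1}$ case deferred to the branched double cover of \cite{DGGM2018} just as the paper does elsewhere. One caution: your gloss that the equality analysis ``pins the radial function to the constant value $r\equiv\pi/2$'' overstates what Proposition~1.1 of \cite{GP2018} can deliver, since the lenses $L^{n}_{\theta}$ with $\theta<\pi$ have round-sphere boundary but non-constant radial distance from the soul; the comparison can only force $\max_{\xi}r(\xi)=\pi/2$, which is all that the conclusion $\rad_{s}(X)=\pi/2$ requires.
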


This proposition allows us to remove the radius assumption in the main theorem. Since the space forms other than $\RP^{n-1}$ has no closed proper hypersurface of co-dimension $1$, we know the set of maximum distance to the soul coincide with the set of foot points, this finishes the proof of \autoref{thm:spaceform}. 
	
\bibliographystyle{alpha}
\bibliography{mybib}

\begin{thebibliography}{DGGGM18}

\bibitem[DGGGM18]{DGGM2018}
Qintao Deng, Fernando Galaz-Garc\'{i}a, Luis Guijarro, and Michael Munn.
\newblock Three-dimensional {A}lexandrov spaces with positive or nonnegative
  {R}icci curvature.
\newblock {\em Potential Anal.}, 48(2):223--238, 2018.

\bibitem[GG87]{GG1987}
Detlef Gromoll and Karsten Grove.
\newblock A generalization of {B}erger's rigidity theorem for positively curved
  manifolds.
\newblock {\em Ann. Sci. \'{E}cole Norm. Sup. (4)}, 20(2):227--239, 1987.

\bibitem[GP93]{GP1993}
Karsten Grove and Peter Petersen.
\newblock A radius sphere theorem.
\newblock {\em Invent. Math.}, 112(3):577--583, 1993.

\bibitem[GP18]{GP2018}
Karsten Grove and Peter Petersen.
\newblock A lens rigidity theorem in alexandrov geometry, arxiv:1805.10221
  [math.dg].
\newblock Preprint, 2018.

\bibitem[Per91]{Per1991}
Grisha Perelman.
\newblock {A. D.} {A}lexandrov's spaces with curvatures bounded from below.
  {II}.
\newblock Preprint, 1991.

\bibitem[PP93]{PP1993}
G.~Ya. Perelman and A.~M. Petrunin.
\newblock Extremal subsets in {A}leksandrov spaces and the generalized
  {L}iberman theorem.
\newblock {\em Algebra i Analiz}, 5(1):242--256, 1993.

\bibitem[RW16]{RW2016}
Xiaochun Rong and Yusheng Wang.
\newblock Finite quotient of join in alexandrov geometry, arxiv:1609.07747
  [math.mg].
\newblock Preprint, 2016.

\bibitem[Wil01]{Wil2001}
Burkhard Wilking.
\newblock Index parity of closed geodesics and rigidity of {H}opf fibrations.
\newblock {\em Invent. Math.}, 144(2):281--295, 2001.

\end{thebibliography}

\end{document}